\documentclass[11pt]{article}

\usepackage[margin=1in]{geometry}
\usepackage[T1]{fontenc}
\usepackage{lmodern,comment}
\usepackage{amsmath,amssymb,amsthm,mathtools}
\usepackage{microtype}
\usepackage{enumitem}
\usepackage[hidelinks]{hyperref}
\usepackage[numbers,sort&compress]{natbib}
\newtheorem{theorem}{Theorem}[section]
\newtheorem{lemma}[theorem]{Lemma}
\newtheorem{corollary}[theorem]{Corollary}
\newtheorem{conjecture}{Conjecture}
\theoremstyle{remark}

\numberwithin{equation}{section}

\DeclareMathOperator{\ex}{ex}
\DeclareMathOperator{\Perm}{Perm}
\DeclareMathOperator{\supp}{supp}
\newcommand{\sh}{\partial}
\newcommand{\Wcal}{\mathcal W}
\newcommand{\Scal}{\mathcal S}
\hypersetup{
	pdftitle={Kalai's Conjecture for tight trees},
	pdfauthor={Dhruv Mubayi},
	pdfsubject={A permutation proof of the shadow bound for tight trees}
}

\title{Kalai's Conjecture for Tight Trees}
\author{
	Dhruv Mubayi\thanks{Department of Mathematics, Statistics and Computer Science, University of Illinois, Chicago, IL 60607. Email: mubayi@uic.edu. Research partially supported by NSF Awards DMS-2552740 and DMS-2153576.} \and
	Jacques Verstra\" ete\thanks{Department of Mathematics, University of California, San Diego, CA, 92093-0112 USA.
		Email: jverstraete@ucsd.edu.
		Research supported by NSF-BSF award DMS-2347832.  }}
\date{}

\begin{document}
	\maketitle
	
	\begin{abstract}
		Let $r \ge 2$ and $t \ge 1$. It is shown that if $T$ is an $r$-uniform tight tree with $t$ edges and
		$H$ is a $T$-free $r$-uniform hypergraph, then
		$|E(H)|\le (t-1)|\sh H|/r$, where $\sh H$ is the $(r-1)$-shadow
		of $H$. \iffalse Equality holds only for $(n,t + r - 2,r)$-designs.\fi The bound is tight infinitely often, and establishes Kalai's Conjecture, whose $r=2$ case is the Erd\H os-S\'os Conjecture.
		
		The proof was
		found by GPT-6 Astra, extending its method of proof for the Erd\H os-S\'os conjecture to the hypergraph setting. It is noteworthy that previous proofs of special cases of the Erd\H os-S\'os conjecture do not extend to give tights bounds in the hypergraph setting. 
		
		A strengthening of the Erd\H os-S\'os conjecture due to the authors about tight lower bounds on the number  of copies of a tree in a graph with  average degree $d \ge t-1\ge 0$ remains open. 
	\end{abstract}
	
	\section{AI Declaration Statement}
	The proof of the main result was found by GPT-6 Astra by first  prompting it to extend its recent proof of the Erd\H os-S\'os conjecture (see Adamczewski and Bloom~\cite[Appendix B.4]{Epoch})  to the case of hypergraph tight paths as in~\cite{FJKMVpaths}, and then prompting it to prove the full Kalai conjecture. As we elaborate throughout the text, GPT-6 Astra's  proof technique for Erd\H os-S\'os conjecture, which uses permutations of the underlying vertex set, perhaps has its genesis in an old argument of Perles (see~\cite{KupitzPerles}), which considered convex geometric graphs (these corresponds to cyclic permutations of the vertex set).  This method was developed in the hypergraph case by F\"uredi-Jiang-Kostochka-Mubayi-Verstra\"ete in~\cite{FJKMVpaths} and gave the previously best bounds for Kalai's conjecture for tight paths. The authors have checked the proof and rewritten much of the text provided by GPT-6 Astra and added further explanations throughout the paper. They take full responsibility for all the content in the paper.

	\section{Introduction}\label{sec:introduction}
	
	All hypergraphs in this paper are finite and simple. An $r$-uniform
	hypergraph, or \emph{$r$-graph}, has edges of size $r$. For an $r$-graph
	$T$, let $\ex(n,T)$ denote the maximum number of edges in an $n$-vertex
	$r$-graph containing no copy of $T$. 
	
	The Erd\H{o}s--Gallai Theorem~\cite{EG} asserts that a graph with no
	path of $t$ edges has at most $(t-1)n/2$ edges. The
	Erd\H{o}s--S\'os Conjecture~\cite{Erdos} asks for the same bound when
	the forbidden path is replaced by an arbitrary tree with $t$ edges. This conjecture has received extensive attention in the literature~\cite{ASS,BPSS2019,BPSS2021,BD,DPRS,Erdos,Erdos1964,EG,ErdosSos1970,FanSun2007,GoerlichZak2016,HRSW2020,McLennan2005,Pokrovskiy2024,Rozhon2019,ReedStein2026,SacleWozniak1997,Stein,TinerTomlin2022,Wozniak1996}. The Erd\H{o}s--S\'os Conjecture was recently proved employing a permutation argument by GPT-6 Astra AI, described by Adamczewski and Bloom~\cite[Appendix B.4]{Epoch}. Similar counting arguments are key to the method of Perles~\cite{KupitzPerles} giving a very elegant alternative proof of the Erd\H{o}s--Gallai Theorem using convex geometric graphs, and to extensions to uniform hypergraphs in~\cite{FJKMVpaths}.

	Kalai proposed a hypergraph generalization of the Erd\H{o}s--S\'{o}s Conjecture in 1984, recorded by
	Frankl and F\"uredi~\cite{FF}, using the following notion of a tree. An $r$-graph $T$ is a \emph{tight $r$-tree} if its edges have an ordering
	$e_1,\ldots,e_t$ such that for every $i>1$, there exists
	$z_i\in e_i$ and an index $p(i)<i$ satisfying
	\begin{equation}\label{eq:tight-tree}
		z_i\notin\bigcup_{h<i}e_h,
		\qquad e_i\setminus\{z_i\}\subseteq e_{p(i)}.
	\end{equation}
	We take $V(T)=\bigcup_{e\in E(T)}e$, so a tight $r$-tree with $t$
	edges has exactly $t+r-1$ vertices -- the case $r = 2$ is precisely a tree in the graph sense. Kalai's Conjecture is as follows:
	
	\begin{conjecture}\label{conj:kalai} {\bf (Kalai's Conjecture)} 
		For $n \geq r \geq 2$ and every tight $r$-tree $T$ with $t$ edges,
		\begin{equation}\label{eq:kalai-binomial}
			\ex(n,T)\le\frac{t-1}{r}\binom{n}{r-1}.
		\end{equation}
	\end{conjecture} 
	
	To motivate this conjecture, let $b = t + r - 2$ and take a family of $b$-vertex blocks in which each $(r-1)$-set
	lies in at most one block and almost every $(r-1)$-set is covered,
	and put a complete $r$-graph on each block. Any copy of a tight tree
	must stay in a single block -- two edges from different blocks cannot
	meet in $r-1$ vertices -- and therefore each tight tree has at most $b = t + r - 2$ vertices and therefore fewer than $t$ edges. Using
	R\"odl's packing theorem~\cite{Rodl} providing the existence of asymptotically optimal partial Steiner systems, one obtains as $n \rightarrow \infty$:
	\[
	\ex(n,T)\ge\left(\frac{t-1}{r}-o(1)\right) \cdot \binom{n}{r-1}.
	\]
	Keevash's existence theorem for designs~\cite{Keevash} (see also Glock, K\"{u}hn, Lo and Osthus~\cite{GKLO}) gives exact
	equality in~\eqref{eq:kalai-binomial} for infinitely many $n$ by
	covering every $(r-1)$-set exactly once with $b$-sets -- these are Steiner systems $S(r-1,b,n)$. In the special case that the tree $T$ is a sunflower -- every edge of $T$ has the form $R \cup \{v\}$ where $|R| = r - 1$ -- then any $r$-uniform hypergraph in which every $(r - 1)$-set is contained in exactly $t - 1$ hyperedges is $T$-free. These are sometimes called \emph{simple $(r - 1)$-$(n,r,t-1)$-designs}.

	Several cases of Kalai's Conjecture were previously known. Frankl
	and F\"uredi~\cite{FF} proved (\ref{eq:kalai-binomial}) for star-shaped tight
	trees, which have an edge meeting every other edge in $r-1$ vertices.
	F\"uredi, Jiang, Kostochka and the authors~\cite{FJKMVtrunk}
	proved an asymptotic version for tight trees with a bounded trunk and
	the exact bound for tight trees with at most four edges. Here a trunk
	is an initial tight subtree in a construction~\eqref{eq:tight-tree}
	such that each remaining edge attaches directly to an edge of that
	subtree; the asymptotic statement fixes $r$ and the trunk size and
	lets $t$ tend to infinity. Stein~\cite{Stein} proved (\ref{eq:kalai-binomial})
	for all tight trees when the host $r$-graph is $r$-partite.
	
	\subsection{Main Theorem}
	
	The \emph{shadow} of an $r$-graph $H$ is
	\[
	\sh H=\{e\subseteq V(H):|e|=r-1,\ e\subseteq h
	\text{ for some }h\in E(H)\}.
	\]
	We prove  Kalai's
	conjecture by proving the following shadow bound  (the equivalence to Kalai's Conjecture was established by F\"uredi, Jiang, Kostochka and the authors~\cite[Proposition 2.2]{FJKMVtrunk}).
	
	\begin{theorem}\label{thm:main}
		Let $n \ge r\ge 2$, and let $T$ be a tight $r$-tree with $t\ge1$ edges\iffalse, and $b = t + r - 2$\fi.
		Every $n$-vertex $T$-free $r$-graph $H$ satisfies
		\begin{equation}\label{eq:main-shadow}
			|E(H)| \; \le \; \frac{t-1}{r} \cdot |\sh H| \; \le \; \frac{t-1}{r} \cdot \binom{n}{r-1} .
		\end{equation}
		
	\end{theorem}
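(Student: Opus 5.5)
We prove the first inequality in \eqref{eq:main-shadow} by double counting over permutations of $V(H)$; the second is trivial since $|\sh H|\le\binom{n}{r-1}$. Following Perles and \cite{FJKMVpaths}, fix a linear ordering $\sigma$ of $V(H)$. Call an edge $h\in E(H)$ \emph{$\sigma$-interval} if its vertices form a block of $r$ consecutive vertices in $\sigma$, with a suitable cyclic or "almost consecutive" relaxation to be fixed below. Likewise call an $(r-1)$-set $S\in\sh H$ \emph{$\sigma$-interval} if it is consecutive in $\sigma$.

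For a uniformly random $\sigma$, a fixed $r$-set is an interval with probability $p_r$ and a fixed $(r-1)$-set with probability $p_{r-1}$. Linearity of expectation then turns the theorem into a pointwise statement: for every $\sigma$,
\[
\#\{\sigma\text{-interval edges}\}\cdot \frac{1}{p_r} \le \frac{t-1}{r}\cdot \#\{\sigma\text{-interval shadow sets}\}\cdot \frac{1}{p_{r-1}}.
\]
To make the constants work, I would not use plain intervals. Instead, following the Erd\H{o}s--S\'os permutation proof, I would assign each edge $h$ and permutation $\sigma$ a weight: $h$ "charges" one of its $r$ shadow sets $h\setminus\{v\}$ when that set is an interval of $\sigma$ and $v$ lies to the right of it. The weights are normalized so that averaging over $\sigma$ gives each edge total weight $r$ spread over its $r$ shadow sets, and each shadow set $S$ receives charge only from edges $S\cup\{v\}$.

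The crux is the local claim. Fix $\sigma$ and a shadow set $S$ that is an interval. Then the total charge $S$ receives should be at most $t-1$ on average, or else we can embed $T$. The embedding is greedy along the tree ordering $e_1,\dots,e_t$ from \eqref{eq:tight-tree}. Map $e_1$ to an edge containing $S$. Each new edge $e_i$ needs a fresh vertex $z_i$ extending the image of $e_i\setminus\{z_i\}$, which is an $(r-1)$-subset of an already embedded edge. This succeeds provided every such $(r-1)$-set has at least $t-1$ "forward" extensions, because at most $t+r-2-(r-1)=t-1$ vertices are already used outside it.

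So the plan is to choose a maximal substructure by deletion. Repeatedly delete an edge $h$ from $H$ whenever some $(r-1)$-subset of $h$ has fewer than $t-1$ extensions, measured in the permutation-weighted sense, with respect to $\sigma$ restricted to later vertices. This is the hypergraph analogue of passing to a subgraph of large minimum degree. Count the deleted edges, charging each to the shadow set that witnessed its deletion; each shadow set absorbs at most $t-1$ deletions. If the process does not delete every edge, the surviving $r$-graph has every shadow set of every edge with at least $t-1$ extensions. The greedy embedding then finds $T$, a contradiction. Thus $|E(H)|\le (t-1)\cdot\#(\text{charged shadow sets})$, and the permutation averaging is exactly what improves the crude factor $t-1$ to $(t-1)/r$. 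It does so because a random ordering makes each edge's deletion attributable to only a $1/r$ fraction of its shadow sets on average: the shadow set omitting the $\sigma$-last vertex.

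I expect the main obstacle to be the precise definition of "forward extensions" relative to $\sigma$. Under that definition, two things must hold simultaneously. First, the greedy embedding of an arbitrary tight tree must succeed, and new vertices may need to lie on either side, since $T$ is not a path. Second, the charging must lose exactly the factor $r$. My first attempt would be to let the deletion order follow $\sigma$ from right to left. Each edge is charged to $h\setminus\{\max_\sigma h\}$, and the order in which $T$ is embedded is adapted so that each $z_i$ plays the role of the $\sigma$-maximal vertex. If that fails for non-path trees, I would switch to the cyclic (convex) version, where the symmetry among the $r$ positions makes the $1/r$ factor automatic.
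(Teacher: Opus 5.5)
Your proposal has a genuine gap, and it is exactly the factor $1/r$, which is the whole difficulty of Kalai's conjecture.

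\textbf{The greedy threshold is off by one.} To attach the $t$-th edge you have already used $t-1$ vertices outside the attaching $(r-1)$-set, so greedy embedding needs $t$ extensions, not $t-1$. Indeed, in the Steiner-system construction every $(r-1)$-set has exactly $t-1$ extensions and $H$ is $T$-free. With the threshold corrected, your deletion-plus-greedy scheme proves only $|E(H)|\le (t-1)|\sh H|$, which is a factor $r$ too weak.

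\textbf{``Permutation averaging'' does not recover the factor $r$.} Suppose each edge $h$ is charged to the single set $h\setminus\{\max_\sigma h\}$. Then for every $\sigma$ the total charge is exactly $|E(H)|$, so averaging over $\sigma$ changes nothing. The load bound you would actually need is that, for each $\sigma$, the average forward codegree is at most $(t-1)/r$. That is just a restatement of the theorem.

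\textbf{No pointwise local claim can supply it either.} In a $T$-free host an $(r-1)$-set may have codegree far above $t-1$: for $r=2$, a large star contains no path with three edges. Conversely, forward codegree around $(t-1)/r$ is much too weak a hypothesis for any greedy embedding. Forward-only extensions also obstruct branching, as you suspected.

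\textbf{The cyclic fallback is already known to lose constants.} The cyclic/convex approach is the one developed in \cite{FJKMVpaths}. There it yields only the non-sharp bounds \eqref{eq:previous-path}, even for tight paths when $r\ge3$. Symmetry among positions does not by itself give $1/r$.

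\textbf{What is missing.} You need an inequality that holds only in aggregate over all orderings, proved by injections that move states between permutations.
\begin{itemize}[leftmargin=*]
\item The paper counts marked states $(\pi,j)$, where $v_1,\dots,v_{r-1}$ span a member of $\sh H$ and $\{v_1,\dots,v_{r-1},v_j\}\in E(H)$.
\item There are $M=r!\,|E(H)|\,(n-r+1)!$ such states, against $W=(r-1)!\,|\sh H|\,(n-r+1)!$ admissible permutations. The factor $r$ is built in by the $r$ choices of marked vertex, and the goal becomes $M\le (t-1)W$.
\item Let $C(T,e)$ be the number of states at which $T$ embeds into $V_j(\pi)$ with root edge $e$ sent to $\{v_1,\dots,v_{r-1},v_j\}$. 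One proves $M\le C(T,e)+(t-1)W$ by induction, using two lemmas.
\item Adding a leaf costs at most $W$. Discard the first good state of each permutation. At every later state, extend the earlier witness by sending the new vertex to $v_j$, which re-roots at the new edge.
\item Gluing two rooted trees along a shared edge costs nothing beyond $M$: $C(T_1,e)+C(T_2,e)\le M+C(T,e)$. This comes from the prefix-exchange injection $(RXY,|R|+|X|)\mapsto(XRY,|X|)$ on orderings of the vertices outside the image of $e$.
\end{itemize}
This lossless gluing step is what handles branching. No minimum-codegree or greedy argument substitutes for it.
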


	This theorem also settles Conjecture 1.1 of  F\"uredi, Jiang, Kostochka and the authors~\cite{FJKMVpaths}. The proof is given in Section \ref{sec:trees}. 
	
	GPT-6 Astra has also give a proof that equality holds in Theorem~\ref{thm:main} if and only if $H$ is a Steiner $S(r-1,b,n)$-system ($b=t+r-2$) or $T$ is a sunflower and every $(r - 1)$-subset of $V(H)$ is contained in exactly $t - 1$ edges of $H$. We have not verified this claim regarding the equality characterization. 
	
	\subsection{Counting conjecture}
	
	The following generalization of the Erd\H{o}s-S\'{o}s Conjecture was stated by the authors in~\cite{MubayiV2016}: let $T$ be a tree with $t$ edges. Then there exists $d_0=d_0(t)$ such that every $n$-vertex graph $G$ of average degree $d\geq d_0$ contains at least \begin{equation}\label{eq:treecount} N_T(G) \geq n(d)_t = n d(d-1)(d-2)\cdots(d-t+1) \end{equation} labeled copies of $T$, where $N_T(G)$ denotes the number of injective graph homomorphisms from $T$ to $G$. 
	The authors in~\cite{MubayiV2016} proved the asymptotically sharp estimate \begin{equation} 
		N_T(G) \geq \left(1-O\left(\frac{t^5}{d^2}\right)\right)n(d)_t. \end{equation} 
	The conjectured bound (\ref{eq:treecount}) has subsequently been proved by Wilson~\cite{Wilson2025} for $d_0=\Omega(t^4)$. A natural strengthening posed by the authors would be to prove the result with \begin{equation} d_0(t) = t - 1. \end{equation} Such a statement would clearly imply the Erd\H{o}s--S\'os conjecture.  It is not too difficult to verify that the conjecture holds for certain small trees, as well as for double stars.
	A proof, which we have not verified, is claimed by GPT-6 Astra when $d=\Omega(t^3)$ for all trees $T$ and when $d=\Omega(t^2)$ and $T$ is a  path with $t$ edges.

	We make the following 
	even more general hypergraph counting conjecture:
	
	\begin{conjecture} \label{conj:hypcount}
		Let $n \geq r \geq 2$, let $d \geq t - 1 \geq 0$, and let $H$ be an $n$-vertex $r$-uniform hypergraph with at least $\frac{d}{r}{n \choose r - 1}$ edges. Then for any tight tree $T$ with $t$ edges, 
		\begin{equation}
			N_T(H) \geq (r-1)!{n\choose r-1}(d)_t.
		\end{equation}    
	\end{conjecture}
	\bigskip
	
	In other words, any hypergraph of average $(r - 1)$-degree $d \geq t - 1$ contains at least $(r-1)!{n\choose r-1}(d)_t$ labelled copies of any prescribed tight tree $T$ with $t$ edges. This implies Kalai's Conjecture (Conjecture \ref{conj:kalai}), but, as mentioned above, remains open even for paths in graphs. GPT-6 Astra has claimed a proof of Conjecture~\ref{conj:hypcount} for $d>16r^2t^3$. We have not verified this. 
	
	\subsection{Organization} 
	
	The special case of trees that are {\em tight paths} contains the main ideas of the proof of Theorem \ref{thm:main}, and is presented in Section \ref{sec:paths}. 
	This proof generalizes the ideas in~\cite{FJKMVpaths}, which previously had the best bound for the extremal number of tight paths. The proof of Theorem \ref{thm:main} is given in subsequent sections.
	\iffalse
	and, in Section \ref{sec:equality}, we characterize the case of equality.
	\fi
	
	\section{Proof of Theorem \ref{thm:main} for tight paths}\label{sec:paths}
	
	The \emph{tight path} $P_k^r$ has vertex set
	$\{x_1,\ldots,x_{k+r-1}\}$ and edges
	$\{x_i,\ldots,x_{i+r-1}\}$ for $1\le i\le k$. Using convex geometric hypergraphs, F\"uredi, Jiang, Kostochka and the authors~\cite[Theorem 1.2]{FJKMVpaths} proved
	\begin{equation}\label{eq:previous-path}
		\ex(n,P_k^r)\le
		\begin{cases}
			\displaystyle\frac{k-1}{2}\binom{n}{r-1},&r\text{ even},\\[6pt]
			\displaystyle\frac12\left(k+\left\lfloor\frac{k-1}{r}\right\rfloor\right)
			\binom{n}{r-1},&r\text{ odd}.
		\end{cases}
	\end{equation}
	
	The proof of this theorem, which proceeds by considering a count of ordered tight paths in so-called convex geometric hypergraphs~\cite{FJKMVpaths} partly motivates the proof of Theorem~\ref{thm:main}; indeed in this section we prove the tight path case of Theorem \ref{thm:main}, as it is much simpler and yet illustrates the main ideas. Roughly speaking, we count permutations whose first $r-1$ vertices, together with a marked later vertex, form an edge of the host hypergraph. For paths, a cyclic rotation extends every eligible state except the first one in each permutation, and these ideas were used to study the extremal function for tight paths in convex geometric hypergraphs~\cite{FJKMVpaths}, and we give this simpler ``path argument'' proof of Theorem \ref{thm:main} in this section. The more complicated case of general trees is achieved in Sections \ref{subsec:rooted-states} -- \ref{subsec:operationsroot}.

	\subsection{Definitions and notation} \label{subsection:defns} Fix an $r$-graph $H$ on a vertex set $V$ of size $n\ge r$.
	For a permutation $\pi=(v_1,\ldots,v_n)$ of $V$, write
	\[
	R(\pi)=(v_1,\ldots,v_{r-1}),\qquad
	V_j(\pi)=\{v_1,\ldots,v_j\}.
	\]
	Let $\Wcal$ be the set of permutations whose first $r-1$ vertices
	form a member of $\sh H$, and define the set of \emph{marked states}
	by
	\begin{equation}\label{eq:states}
		\Omega=\bigl\{(\pi,j):\pi\in\Wcal,\ r\le j\le n,
		\ \{v_1,\ldots,v_{r-1},v_j\}\in E(H)\bigr\}.
	\end{equation}
	Set $W=|\Wcal|$ and $M=|\Omega|$. Direct counting gives
	\begin{equation}\label{eq:counts}
		\begin{aligned}
			W&=(r-1)!\,|\sh H|\,(n-r+1)!,\\
			M&=r!\,|E(H)|\,(n-r+1)!.
		\end{aligned}
	\end{equation}
	For the first identity, choose the initial shadow edge, order it,
	and order the remaining vertices. For the second, choose an edge,
	choose its marked vertex in $r$ ways, and order its other $r-1$
	vertices at the beginning of the permutation. Order all remaining
	$n-r+1$ vertices arbitrarily; the position of the chosen marked
	vertex determines $j$. For $\ell\ge1$, let $\Scal_\ell\subseteq\Omega$ consist of the
	states $(\pi,j)$ for which $H[V_j(\pi)]$ contains a tight path of
	$\ell$ edges ending with the ordered tuple $R(\pi)$. Explicitly,
	there must be distinct vertices $x_1,\ldots,x_{\ell+r-1}\in V_j(\pi)$
	such that
	\begin{equation}\label{eq:path-state}
		\mbox{For }1 \leq i \leq \ell, 
		\{x_i,\ldots,x_{i+r-1}\} \in E(H)\mbox{ and }
		(x_{\ell+1},\ldots,x_{\ell+r-1}) = R(\pi).
	\end{equation}
	The permutation specifies the allowed vertex set and the terminal
	tuple; the path may traverse its other vertices in any order.
	Every marked state supports the one-edge path
	$(v_j,v_1,\ldots,v_{r-1})$, so
	\begin{equation}\label{eq:path-base}
		\Scal_1=\Omega.
	\end{equation}
	
	\subsection{Mapping lemma for tight paths}
	
	\begin{lemma}\label{lem:path-transfer}
		For every $\ell\ge1$,
		\begin{equation}\label{eq:path-transfer}
			|\Scal_\ell|\le |\Scal_{\ell+1}|+W.
		\end{equation}
	\end{lemma}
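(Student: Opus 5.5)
The plan is to construct an explicit injective map $\Phi$ on $\Omega$ that rotates the terminal tuple of $\pi$ so that it absorbs the marked vertex, and to show that $\Phi$ sends $\Scal_\ell$ into $\Scal_{\ell+1}$ except for at most one state per permutation. Those exceptional states number at most $W$, which gives~\eqref{eq:path-transfer}.

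\textbf{Step 1 (the map).} For $(\pi,j)\in\Omega$ with $\pi=(v_1,\ldots,v_n)$, let $\sigma$ be obtained from $\pi$ by cyclically rotating the entries in positions $1,\ldots,r-1,j$:
\[
\sigma=(v_2,\ldots,v_{r-1},v_j,v_r,\ldots,v_{j-1},v_1,v_{j+1},\ldots,v_n),
\]
and set $\Phi(\pi,j)=(\sigma,j)$. We check that $\Phi(\pi,j)\in\Omega$. First, $R(\sigma)=(v_2,\ldots,v_{r-1},v_j)$ is contained in the edge $\{v_1,\ldots,v_{r-1},v_j\}$, so $\sigma\in\Wcal$. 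Second, the vertex in position $j$ of $\sigma$ is $v_1$, and $R(\sigma)\cup\{v_1\}$ is that same edge. Also $V_j(\sigma)=V_j(\pi)$.

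$\Phi$ is injective: from $(\sigma,j)$ we read off $v_1=\sigma_j$ and $v_j=\sigma_{r-1}$, and undoing the rotation recovers $\pi$. So it suffices to show that all but at most $W$ states of $\Scal_\ell$ are mapped into $\Scal_{\ell+1}$.

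\textbf{Step 2 (extending the path).} Suppose $(\pi,j)\in\Scal_\ell$ and there is a witness path $x_1,\ldots,x_{\ell+r-1}$ as in~\eqref{eq:path-state} that lies in $V_j(\pi)$ and avoids $v_j$. Appending $x_{\ell+r}:=v_j$ adds the edge $\{v_1,\ldots,v_{r-1},v_j\}\in E(H)$. The result is a tight path with $\ell+1$ edges on distinct vertices of $V_j(\pi)=V_j(\sigma)$, and its last $r-1$ vertices, in order, are $(v_2,\ldots,v_{r-1},v_j)=R(\sigma)$. Hence $\Phi(\pi,j)\in\Scal_{\ell+1}$.

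\textbf{Step 3 (main point: avoiding $v_j$).} The only obstacle is that every witness path might use $v_j$. To handle this, fix $\pi$ and let $J_\pi=\{j:(\pi,j)\in\Scal_\ell\}$. If $j\in J_\pi$ is not the minimum of $J_\pi$, pick $j_0\in J_\pi$ with $j_0<j$. A witness path for $(\pi,j_0)$ lies in $V_{j_0}(\pi)\subseteq V_{j-1}(\pi)$, so it avoids $v_j$ and lies in $V_j(\pi)$. It ends in $R(\pi)$ as required, so it serves as a witness for $(\pi,j)$ to which Step 2 applies.

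Thus the only states of $\Scal_\ell$ that may fail to map into $\Scal_{\ell+1}$ are the pairs $(\pi,\min J_\pi)$, at most one for each $\pi\in\Wcal$, hence at most $W$ in total. Since $\Phi$ is injective, $|\Scal_\ell|\le|\Scal_{\ell+1}|+W$.
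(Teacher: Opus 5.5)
Your proposal is correct and is essentially the paper's proof: the same rotation of the entries in positions $1,\ldots,r-1,j$ (which preserves $V_j(\pi)$ and the marked edge), the same discarding of the least-index state of $\Scal_\ell$ for each permutation, and the same extension of an earlier state's witness path by the new vertex $v_j$. The only cosmetic difference is that you check injectivity by explicit inversion, whereas the paper notes that the $r$-th power of the rotation map is the identity.
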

	
	\begin{proof}
		For each $\pi\in\Wcal$ having a state in $\Scal_\ell$, discard
		$(\pi,j)$ with $j$ least among its states in $\Scal_\ell$.
		At most $W$ states are discarded. We provide an injection $\Phi'$ from the remaining states
		into $\Scal_{\ell+1}$.
		
		For any $(\pi,j)\in\Omega$, write
		\[
		\pi=(a_1,\ldots,a_{r-1},A,x,B),\qquad x=v_j,
		\]
		where $A=(v_r,\ldots,v_{j-1})$ and $B=(v_{j+1},\ldots,v_n)$
		are possibly empty words. Define
		\begin{equation}\label{eq:path-rotation}
			\Phi(\pi,j)=(\pi',j),\qquad
			\pi'=(a_2,\ldots,a_{r-1},x,A,a_1,B).
		\end{equation}
		This rotates the entries in positions $1,\ldots,r-1,j$ and fixes
		all other entries. It preserves the marked edge, and hence maps
		$\Omega$ to itself. Since $\Phi^r$ is the identity, $\Phi$ is a
		bijection of $\Omega$. Moreover,
		\begin{equation}\label{eq:path-support}
			V_j(\pi')=V_j(\pi),\qquad
			R(\pi')=(a_2,\ldots,a_{r-1},x).
		\end{equation}
		
		Now let $(\pi,j)\in\Scal_\ell$ be a state that has not been discarded in the first step. There exists
		$i<j$ with $(\pi,i)\in\Scal_\ell$. A witnessing path for this
		earlier state lies in $V_i(\pi)$ and ends with
		$(a_1,\ldots,a_{r-1})$. Since $x=v_j\notin V_i(\pi)$, appending
		$x$ introduces a new vertex and the new edge
		$\{a_1,\ldots,a_{r-1},x\}\in E(H)$.
		The resulting tight path has $\ell+1$ edges, lies in $V_j(\pi')$,
		and ends with $R(\pi')$. Thus $\Phi(\pi,j)\in\Scal_{\ell+1}$.
		The restriction of the bijection $\Phi$ is the required injection $\Phi'$.
	\end{proof}
	
	\begin{proof}[Proof of Theorem~\ref{thm:main} for tight paths]
		We aim to show that if $H$ is an $n$-vertex $r$-uniform $P_k^r$-free hypergraph, then $|E(H)| \leq (k - 1)/r \cdot |\sh H|$. The assertion is immediate if $E(H)=\varnothing$. Otherwise
		$n\ge r$, so the preceding definitions apply. If $H$ is
		$P_k^r$-free, then $\Scal_k=\varnothing$. Iterating
		Lemma~\ref{lem:path-transfer} and using~\eqref{eq:path-base} gives
		\[
		M=|\Scal_1|\le (k-1)W.
		\]
		Substituting~\eqref{eq:counts} and cancelling the factor
		$(r-1)!(n-r+1)!$ yields $r|E(H)|\le(k-1)|\sh H|$. \end{proof}
	
	\section{Proof of Theorem \ref{thm:main}}\label{sec:trees}
	
	For general tight trees, we prescribe the image of an entire root edge as in Adamczewski and Bloom~\cite[Appendix B.4]{Epoch}. A prefix permutation inequality then allows two rooted pieces to be joined
	without an additional error term. This part is not necessary in the proof given in Section \ref{sec:paths}, as we may repeatedly remove leaf edges and map them to leaf edges of shorter tight paths, whereas for a tight tree, we cannot map leaf edges to leaf edges.
	Induction then combines this joining
	operation with the addition of a leaf edge.

	We retain the notation $H$, $V$, $\Wcal$, $\Omega$, $W$ and $M$ from
	Section~\ref{sec:paths}. The proof for trees uses embeddings with a
	prescribed image of a whole root edge. 
	The following straightforward decomposition lemma allows us to do induction on the size of the tree.
	
	\begin{lemma}\label{lem:structure}
		Let $T$ be a tight $r$-tree with at least two edges, and fix
		$e\in E(T)$. At least one of the following holds.
		\begin{enumerate}[label=\textup{(\roman*)},leftmargin=*]
			\item\label{case:leaf}
			$T$ is obtained from a smaller tight tree $S$ by adding a new
			vertex $z$ and the edge $e=F\cup\{z\}$, where $F$ is an
			$(r-1)$-subset of an edge $f$ of $S$.
			\item\label{case:gluing}
			$T=T_1\cup T_2$ for tight trees $T_1,T_2$, each with fewer edges
			than $T$, such that
			\[
			E(T_1)\cap E(T_2)=\{e\},\qquad V(T_1)\cap V(T_2)=e.
			\]
		\end{enumerate}
	\end{lemma}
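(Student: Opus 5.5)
The plan is to reduce the lemma to a re-rooting fact: \emph{every edge of a tight tree can be placed first in some ordering satisfying~\eqref{eq:tight-tree}}. Given that, we order $T$ as $e=e_1,e_2,\ldots,e_t$ and look at how the branches hanging off $e$ are arranged. Below, a \emph{child} of $e$ is an edge $e_i$ with $p(i)=1$, and the \emph{branch} of a child $c$ is the set of edges whose parent chain $i\to p(i)\to p(p(i))\to\cdots$ passes through $c$.

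\textbf{Re-rooting.} I would prove this by induction on $t$, and I expect it to be the only step needing real care. Take any valid ordering $e_1,\ldots,e_t$. Since $z_t$ is in no earlier edge and no later edge exists, $z_t$ lies only in $e_t$.
\begin{itemize}
\item If $e\neq e_t$: apply induction to $T-e_t$ (still a tight tree) to get an ordering starting at $e$. Then append $e_t$ with the same $z_t$ and the same parent $e_{p(t)}$, which still precedes it.
\item If $e=e_t$: let $f=e_{p(t)}$ and apply induction to $T-e_t$ to get an ordering $f=g_1,g_2,\ldots$. Now use the ordering $e_t,f,g_2,\ldots$. For $f$, take $z$ to be the unique vertex $w\in f\setminus e_t$; then $f\setminus\{w\}=e_t\setminus\{z_t\}\subseteq e_t$. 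For each later $g_i$, keep its old $z$ and parent. Its $z$ is new, because the only vertex added to the earlier edges is $z_t$, and $z_t$ lies in no edge other than $e_t$.
\end{itemize}

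\textbf{Vertex bookkeeping.} By induction along the ordering, $e_i\subseteq e_{p(i)}\cup\{z_i\}$. Hence every vertex of an edge in the branch of a child $c$ lies in $e\cup\{z_i: e_i\text{ in that branch}\}$. Each vertex outside $e$ equals $z_i$ for exactly one $i$, so distinct branches share no vertices outside $e$. Moreover, restricting the ordering to $e$ together with any union of branches gives a valid tight-tree ordering, because parents are kept.

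\textbf{Two or more children.} Split the children into two nonempty groups. Let $T_1$ be $e$ together with the branches of the first group, and $T_2$ be $e$ together with the branches of the second. Both are tight trees with fewer edges than $T$. Their only common edge is $e$, and by the bookkeeping $V(T_1)\cap V(T_2)=e$. This is case~\ref{case:gluing}.

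\textbf{Exactly one child.} Since $t\ge2$, there is exactly one child $c=e_i$, with $F:=c\setminus\{z_i\}\subseteq e$ and $|F|=r-1$. Write $e=F\cup\{y\}$. Every other edge lies in the branch of $c$, so its vertices lie in $c\cup\{\text{new } z\text{'s}\}$. Since $y\notin c$, the vertex $y$ belongs to $e$ only. Let $S=T-e$. Applying re-rooting to $T$ at $c$ and deleting $e$ exhibits $S$ as a tight tree: only the $z$ of $c$'s successor changes, and descendants keep their parents. Then $T$ is obtained from $S$ by adding the new vertex $y$ and the edge $F\cup\{y\}$ with $F\subseteq c\in E(S)$. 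This is case~\ref{case:leaf}.

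The main obstacle is the re-rooting claim, specifically the subcase $e=e_t$, where one must check that no later edge's new vertex collides with $z_t$; the argument above handles this because $z_t$ lies in no edge other than $e_t$.
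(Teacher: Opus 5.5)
Your proof is correct. The paper gives no proof of this lemma (it calls it an easy exercise), so your argument, which re-roots at $e$ and then splits according to the children of $e$, supplies the omitted details rather than competing with an existing proof.

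The re-rooting step is the real content. In Theorem~\ref{thm:rooted-count} the lemma is applied at whatever edge the induction produces (after case (i) the root moves to $f$), so it must hold for every $e \in E(T)$, not only for the first edge of a given ordering. Both of your re-rooting subcases check out, including the key point that $z_t$, which lies only in $e_t$, cannot be the new vertex of any $g_i$. The branch bookkeeping also correctly yields $V(T_1)\cap V(T_2)=e$ in the case with two or more children.

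One small repair is needed in the one-child case. Your justification that $S=T-e$ is a tight tree, namely ``applying re-rooting at $c$ \dots\ only the $z$ of $c$'s successor changes,'' does not follow from the re-rooting claim as you stated it. That claim only guarantees \emph{some} ordering beginning with $c$, with no control over the $z$'s or parents. It is also unnecessary:
\begin{itemize}
\item Since $p(2)=1$ always, the unique child must be $c=e_2$.
\item Every $k\ge3$ therefore has $p(k)\ge2$.
\item Each $z_k$ avoids $\bigcup_{h<k}e_h\supseteq\bigcup_{2\le h<k}e_h$.
\end{itemize}
Hence $e_2,e_3,\ldots,e_t$, with the same $z_k$ and parents, is already a valid ordering of $S$. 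If you intended the specific reordering $c,e,e_3,\ldots,e_t$ followed by deleting $e$, that also works, but it is the same observation.

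A minor point: the inclusion $e_i\subseteq e_{p(i)}\cup\{z_i\}$ is immediate from the definition. Induction is only needed when you iterate it along parent chains.
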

	
	This lemma is necessary for our inductive proof, which maps leaves to leaves when $T$ is a tight path (as happens in Case (i)), but which may map leaves to non-leaf edges when $T$ is not a path (Case (ii)). We omit the proof of Lemma~\ref{lem:structure}, which is an easy exercise.

	\subsection{States rooted at an edge}\label{subsec:rooted-states}
	We now generalize the definition of ${\mathcal S}_{\ell}$ in Section~\ref{subsection:defns} from tight paths to arbitrary hypergraphs.
	Let $S$ be an $r$-graph with a distinguished ordered edge
	$\vec e=(u_1,\ldots,u_r)$. For $V(H)=\{v_1, \ldots, v_n\}$ and $\pi=v_1v_2\ldots v_n$, a state $(\pi,j)\in\Omega$ is
	\emph{good for $(S,\vec e)$} if there is an injective embedding
	$\varphi:V(S)\longrightarrow V(H)$ such that for $1 \leq i \leq r-1$,
	\begin{equation}\label{eq:rooted-state}
		\begin{gathered}
			\varphi(u_i)=v_i,\qquad \varphi(u_r)=v_j,\\
			\varphi(V(S))\subseteq V_j(\pi).
		\end{gathered}
	\end{equation}
	Here an embedding sends every edge of $S$ to an edge of $H$.
	Let $C(S,\vec e)$ be the number of good states. Each state is
	counted once, regardless of how many embeddings witness it.
	
	This number does not depend on the ordering of $e$. To see this,
	write $q_i=i$ for $i<r$ and $q_r=j$. If the root is reordered as
	$(u_{\sigma(1)},\ldots,u_{\sigma(r)})$, permute the host entries
	in these positions so that
	\[
	v'_{q_i}=v_{q_{\sigma(i)}}\qquad(1\le i\le r),
	\]
	and keep all other entries fixed. This is a bijection on $\Omega$,
	preserves $V_j(\pi)$, and makes the same embedding a witness for
	the reordered root. We therefore write $C(S,e)$, choosing an
	ordering of the fixed edge $e$ when convenient. The count can
	depend on the choice of $e$. For the one-edge tree,
	\begin{equation}\label{eq:rooted-base}
		C(e,e)=M.
	\end{equation}
	
	\subsection{Adding a leaf}\label{subsec:operationsleaf}
	
	The following lemma deals with Case (i) of Lemma~\ref{lem:structure} in the decomposition of a tight tree, and is the analog of the case of paths (Lemma~\ref{lem:path-transfer}) in Section \ref{sec:paths}.
	
	\begin{lemma}\label{lem:leaf}
		Let $S$ be an $r$-graph, let $f\in E(S)$, and let $F\subset f$
		have size $r-1$. Form $T$ by adding a new vertex $z$ and the
		edge $e=F\cup\{z\}$. Then
		\begin{equation}\label{eq:leaf}
			C(S,f)\le C(T,e)+W.
		\end{equation}
	\end{lemma}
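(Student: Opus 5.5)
We follow the proof of Lemma~\ref{lem:path-transfer} almost verbatim, replacing tight paths by rooted embeddings of $S$. Order $f=(u_1,\ldots,u_r)$ so that $F=\{u_1,\ldots,u_{r-1}\}$ and $u_r$ is the vertex of $f\setminus F$. Recall that $C(S,f)$ does not depend on the ordering of $f$, so we may count good states for $(S,\vec f)$ with this ordering. Order the new edge as $\vec e=(u_2,\ldots,u_{r-1},z,u_1)$; any ordering works, but this one matches the rotation $\Phi$. Let $\mathcal G_S\subseteq\Omega$ be the states good for $(S,\vec f)$ and $\mathcal G_T$ those good for $(T,\vec e)$.

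First we discard states. For each $\pi\in\Wcal$ having at least one state in $\mathcal G_S$, we discard the state $(\pi,j)$ with $j$ least among them. At most $W$ states are discarded. It then suffices to show that the bijection $\Phi$ of $\Omega$ from \eqref{eq:path-rotation} maps every remaining state of $\mathcal G_S$ into $\mathcal G_T$. Since $\Phi$ is injective, this gives $|\mathcal G_S|-W\le|\mathcal G_T|$, which is \eqref{eq:leaf}.

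Now take a remaining state $(\pi,j)\in\mathcal G_S$, with $\pi=(a_1,\ldots,a_{r-1},A,x,B)$ and $x=v_j$. There is $i<j$ with $(\pi,i)\in\mathcal G_S$, witnessed by an embedding $\varphi$ of $S$ with the following properties:
\begin{itemize}
\item $\varphi(u_k)=a_k$ for $k<r$;
\item $\varphi(u_r)=v_i$;
\item $\varphi(V(S))\subseteq V_i(\pi)$.
\end{itemize}
Extend $\varphi$ to $T$ by setting $\varphi(z)=x$. This is injective because $x=v_j\notin V_i(\pi)\supseteq\varphi(V(S))$. The new edge $e=F\cup\{z\}$ is mapped to $\{a_1,\ldots,a_{r-1},x\}$, which is an edge of $H$ because $(\pi,j)\in\Omega$. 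So the extension is an embedding of $T$, and its image lies in $V_j(\pi)=V_j(\pi')$ by \eqref{eq:path-support}.

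Finally we check the root conditions for $\Phi(\pi,j)=(\pi',j)$, where $\pi'=(a_2,\ldots,a_{r-1},x,A,a_1,B)$. In $\pi'$ the first $r-1$ entries are $a_2,\ldots,a_{r-1},x$, and these are the images of $u_2,\ldots,u_{r-1},z$. The entry in position $j$ is $a_1=\varphi(u_1)$. This is exactly condition \eqref{eq:rooted-state} for the ordered root $\vec e$. Hence $\Phi(\pi,j)\in\mathcal G_T$.

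The step needing the most care is the bookkeeping of root orderings. Since $C(\cdot,\cdot)$ is independent of the ordering, we are free to choose orderings of $f$ and $e$ compatible with the rotation, and the one subtle point, injectivity of the extension, comes from using the earlier witness $i<j$. Note also that $S$ need not be a tree here; nothing beyond the existence of an embedding is used.
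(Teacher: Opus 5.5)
Your proof is correct and is essentially the paper's argument: discard the first good state of each permutation, then extend the witness of an earlier good state $(\pi,i)$ by sending $z$ to $v_j$. The only difference is your detour through $\Phi$ with the rotated root order $(u_2,\ldots,u_{r-1},z,u_1)$, which is unnecessary because it is just an instance of the root-reordering bijection; with the order $(u_1,\ldots,u_{r-1},z)$ the retained state $(\pi,j)$ is itself good for $(T,e)$, and that is how the paper concludes.
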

	
	\begin{proof}
		Write $F=\{u_1,\ldots,u_{r-1}\}$ and $f=F\cup\{y\}$.
		Choose the root orders $(u_1,\ldots,u_{r-1},y)$ for $S$ and
		$(u_1,\ldots,u_{r-1},z)$ for $T$.
		For every $\pi\in\Wcal$, discard its first good state for
		$(S,f)$, if one exists. This discards at most $W$ states.

		For a retained state $(\pi,j)$, there is an earlier good state
		$(\pi,i)$ with $i<j$. Choose its witnessing embedding $\varphi$
		of $S$. It maps $u_h$ to $v_h$ for $h<r$, maps $y$ to $v_i$,
		and has image in $V_i(\pi)$. Since $v_j\notin V_i(\pi)$, extend
		$\varphi$ by sending $z$ to $v_j$. The additional edge maps to
		$\{v_1,\ldots,v_{r-1},v_j\}\in E(H)$, and the resulting embedding
		satisfies~\eqref{eq:rooted-state} for $(T,e)$ at $(\pi,j)$.
		Thus every retained state is itself good for $(T,e)$, proving
		\eqref{eq:leaf}.
	\end{proof}

	\subsection{Joining along root edges}\label{subsec:operationsroot}

	%\subsection{A permutation-prefix inequality}\label{subsec:prefix}
	
	The more complicated decomposition offered by Case (ii) of Lemma~\ref{lem:structure} is the subject of this section.
	For a finite set $U$, let $\Perm(U)$ denote the set of
	permutations of $U$. For $w=(w_1,\ldots,w_m)\in\Perm(U)$,
	write
	\[
	w_{\le s}=(w_1,\ldots,w_s),
	\qquad
	w_{>s}=(w_{s+1},\ldots,w_m).
	\]
	The support of a permutation or one of its blocks is
	the set of its entries. When $|U|=m$ and $\mathcal F\subseteq2^U$, put
	\[
	C_U(\mathcal F)=\{(w,s):w\in\Perm(U),\ 0\le s\le m,
	\ \supp(w_{\le s})\in\mathcal F\}\]  and  $c_U({\cal F})=|C_U({\cal F})|$.
	The empty prefix is included in $C_U(\mathcal F)$. Thus the total number of pairs
	$(w,s)$ is $(m+1)m!=(m+1)!$. 
	
	In Lemma~\ref{lem:prefix} below, we should think of a tight tree $T$ being expressed as the union of two tight trees $T_1$ and $T_2$ with a common edge $e$ so  $T=T_1 \cup T_2$ and $V(T_1) \cap V(T_2)=e$, 
	and $\cal A$, $\cal B$, $\cal C$ as vertex sets of the underlying hypergraph that support $T_1, T_2$, and $T$, respectively, where (the image of) $e$ is excluded.
	\begin{lemma}\label{lem:prefix}
		Suppose $\mathcal A,\mathcal B,\mathcal C\subseteq2^U$ satisfy
		$\mathcal C\subseteq\mathcal A$ and
		\begin{equation}\label{eq:disjoint-gluing}
			R\in\mathcal A,\quad X\in\mathcal B,\quad R\cap X=\varnothing
			\quad\Longrightarrow\quad R\cup X\in\mathcal C.
		\end{equation}
		If $|U|=m$, then
		\begin{equation}\label{eq:prefix-gluing}
			c_U(\mathcal A)+c_U(\mathcal B)
			\le (m+1)!+c_U(\mathcal C).
		\end{equation}
	\end{lemma}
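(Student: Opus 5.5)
The plan is to prove \eqref{eq:prefix-gluing} one permutation at a time, after turning the $\mathcal B$-count into a count of \emph{suffixes}. Reversing permutations, $w\mapsto(w_m,\ldots,w_1)$, is a bijection of $\Perm(U)$ that sends the prefix of length $s$ to the suffix of length $s$. Hence
\[
c_U(\mathcal B)=\bigl|\{(w,s): w\in\Perm(U),\ 0\le s\le m,\ \supp(w_{>s})\in\mathcal B\}\bigr|.
\]
For fixed $w$, put
\[
S_A(w)=\{s:\supp(w_{\le s})\in\mathcal A\},\qquad T_B(w)=\{s:\supp(w_{>s})\in\mathcal B\},
\]
both subsets of $\{0,\ldots,m\}$. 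Then
\[
c_U(\mathcal A)+c_U(\mathcal B)=\sum_w\bigl(|S_A(w)\cup T_B(w)|+|S_A(w)\cap T_B(w)|\bigr).
\]
Since $|S_A(w)\cup T_B(w)|\le m+1$, the first terms sum to at most $(m+1)!$. It therefore suffices to prove
\[
\sum_w|S_A(w)\cap T_B(w)|\le c_U(\mathcal C),
\]
which I would do by constructing an injection from the ``overlap'' pairs $(w,s)$, $s\in S_A(w)\cap T_B(w)$, into $C_U(\mathcal C)$.

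For an overlap pair, the prefix $R=\supp(w_{\le s})\in\mathcal A$ and the suffix $X=\supp(w_{>s})\in\mathcal B$ are disjoint. So \eqref{eq:disjoint-gluing} only gives $U\in\mathcal C$, which is too weak, because all overlap pairs would then map to the single prefix set $U$. The remedy is to combine the suffix at $s$ with a \emph{different}, shorter prefix in $\mathcal A$. Take a canonical index $s'\le s$ in $S_A(w)$, for instance the least element of $S_A(w)$. Then $\supp(w_{\le s'})\in\mathcal A$ and $\supp(w_{>s})\in\mathcal B$ are disjoint, so their union lies in $\mathcal C$. Send $(w,s)$ to
\[
\bigl((w_{\le s'},\,w_{>s},\,w_{s'+1},\ldots,w_{s}),\ s'+m-s\bigr),
\]
whose prefix support is exactly that union, so the image lies in $C_U(\mathcal C)$. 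When $s'=s$ the image is just $(w,m)$, which accounts for the case $U\in\mathcal C$. The hypothesis $\mathcal C\subseteq\mathcal A$ is what I expect to use to make the canonical choice recoverable. The image prefix is itself in $\mathcal A$, so one can look, inside the image permutation, for the position where the original $\mathcal A$-prefix ends.

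The main obstacle is injectivity. Given the image $(w',c)$, one must recover both the split point $s'$ inside the first block and the boundary between the moved suffix $w_{>s}$ and the moved middle block. If the choice of $s'$ via the minimum of $S_A(w)$ does not decode cleanly, I would try the other natural rules. One is to take $s'$ as the largest element of $S_A(w)$ below the least element of $T_B(w)$. Another is to run the argument as an iterative matching along each permutation, pairing the $k$-th overlap index with the $k$-th element of $S_C$ for a suitable bijection of permutations, in the spirit of the ``discard the first state'' device of Lemma~\ref{lem:leaf}. I would test candidate rules on small $m$ (say $m\le 3$, with $\mathcal A=\mathcal B=\mathcal C=2^U$ and with $\mathcal C$ as small as \eqref{eq:disjoint-gluing} allows) before committing to one.
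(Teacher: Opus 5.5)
Your reduction is correct, and the first rule you propose, $s'=\min S_A(w)$, already works. As written, though, the proposal stops short of a proof: injectivity is left open, and the backup rules are not needed. The check is short. For an overlap pair $(w,s)$ put $p=\min S_A(w)\le s$ and write $w=PMQ$ with $P=w_{\le p}$, $M=(w_{p+1},\ldots,w_s)$, $Q=w_{>s}$. The image is then $(PQM,\,p+m-s)$.

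Given an image $(w',c)$, note that $w'$ begins with $P$. Also $\supp(P)\in\mathcal A$, while no proper prefix of $P$ (which is a prefix of $w$ shorter than $p$) has support in $\mathcal A$. Hence $p=\min S_A(w')$ can be read off from $w'$ alone. Then $Q=(w'_{p+1},\ldots,w'_c)$ and $M=w'_{>c}$, so $w=PMQ$ and $s=m-c+p$ are recovered. This decoding uses only the minimality of $p$, and the gluing hypothesis is used only to land in $C_U(\mathcal C)$. Contrary to your expectation, the assumption $\mathcal C\subseteq\mathcal A$ plays no role.

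The paper takes a different route. It injects the pairs whose prefix support lies in $\mathcal A\setminus\mathcal C$ into the \emph{complement} of $C_U(\mathcal B)$. It takes $R$ to be the shortest prefix with support in $\mathcal A\setminus\mathcal C$ and sends $w=RXY$, $s=|R|+|X|$ to $w'=XRY$, $s'=|X|$. To decode, $s'$ gives $X$, and $R$ is found as the shortest $(\mathcal A\setminus\mathcal C)$-prefix of the remainder.

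Both arguments rest on the same device: a shortest prefix in a family survives moving a block behind it. The difference is in the bookkeeping. You swap the last two blocks and map \emph{into} $C_U(\mathcal C)$ after a reversal and a per-permutation inclusion--exclusion. The paper swaps the first two blocks and maps \emph{out of} $C_U(\mathcal B)$.

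The paper's version is a single injection with no reversal step, so it is a little shorter. Yours proves the sharper inequality
\[
c_U(\mathcal A)+c_U(\mathcal B)\le\sum_{w}\bigl|S_A(w)\cup T_B(w)\bigr|+c_U(\mathcal C).
\]
This isolates exactly where the $(m+1)!$ term is lossy, which is potentially useful for an equality analysis. It also makes visible that $\mathcal C\subseteq\mathcal A$ is superfluous. The paper's argument does not truly need it either, since $c_U(\mathcal A)-c_U(\mathcal C)\le c_U(\mathcal A\setminus\mathcal C)$ in general.
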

	
	\begin{proof}
		We construct an injection from the set of pairs $(w,s)\in  
		C_U(\mathcal A)$  for which $\supp(w_{\le s})\not \in\mathcal C$ to the set of all pairs $(w', s')$ with  $\supp(w'_{\le s'})\not \in\mathcal B$. 
		The will imply
		$c_U(\mathcal A)-c_U(\mathcal C)
		\le (m+1)!-c_U(\mathcal B)$ as required.
		
		Let $(w,s)$ be a pair of the first
		kind, and let $R$ be the shortest prefix of $w$ with
		$\supp(R)\in\mathcal A\setminus\mathcal C$.
		This prefix exists and has length at most $s$. Write
		\[
		w=RXY,\qquad s=|R|+|X|,\qquad w'=XRY, \qquad s'=|X|
		\]
		where juxtaposition denotes concatenation, and define
		\begin{equation}\label{eq:prefix-map}
			(w,\,s)\longmapsto(w',\,s').
		\end{equation}
		If $\supp(X)\in\mathcal B$, then~\eqref{eq:disjoint-gluing}
		would imply $\supp(RX)\in\mathcal C$, contrary to the assumption
		on the input pair. Hence the output has prefix outside
		$\mathcal B$.

		Given $(w', s')$ let us now show how to recover $(w,s)$ uniquely, thus showing that this map is injective. First observe that, given $w'$, the integer  $s'$ determines the word
		$X$. In the remaining suffix $w'_{>s'}=RY$, take the shortest prefix whose
		support belongs to $\mathcal A \setminus \mathcal C$. This prefix
		is exactly $R$, since its support belongs to  $A\setminus\mathcal C$, and each of its shorter prefixes is
		the corresponding shorter prefix of the original word $w$, which
		does not belong to $\mathcal A \setminus \mathcal C$. We thereby recover $R$ and $Y$, and hence $(w,s)$. This also applies when $R$ or $X$ is empty.
	\end{proof}
	In the application to tight trees, we have the stronger
	inclusion $\mathcal C\subseteq\mathcal A\cap\mathcal B$:
	an embedding of the whole tree restricts to an embedding
	of either rooted subtree. However, the lemma requires
	only $\mathcal C\subseteq\mathcal A$.

	\begin{lemma}\label{lem:root-gluing}
		Suppose $T=T_1\cup T_2$, where
		\[
		V(T_1)\cap V(T_2)=e,\qquad e\in E(T_1)\cap E(T_2).
		\]
		Then
		\begin{equation}\label{eq:root-gluing}
			C(T_1,e)+C(T_2,e)\le M+C(T,e).
		\end{equation}
	\end{lemma}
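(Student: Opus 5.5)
The plan is to reduce to Lemma~\ref{lem:prefix}. We apply it separately to each fixed ``frame'' of a state: the root tuple, the marked vertex, and the suffix after it. Then we sum over frames.

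Fix the root ordering $e=(u_1,\ldots,u_r)$ for $T_1$, $T_2$ and $T$. Group the states $(\pi,j)\in\Omega$ according to the data $\tau=(v_1,\ldots,v_{r-1},v_j,B)$, where $B=(v_{j+1},\ldots,v_n)$ is the suffix after the marked vertex. For fixed $\tau$, let $U=V\setminus(\{v_1,\ldots,v_{r-1},v_j\}\cup\supp B)$ and $m=|U|$. The states with frame $\tau$ then correspond to pairs $(w,m)$, where $w\in\Perm(U)$ is the middle word $A=(v_r,\ldots,v_{j-1})$. This parametrization does not see the flexibility of $s$, so I refine the grouping. Instead I group states by $K=\{v_1,\ldots,v_{r-1},x\}\in E(H)$ with its ordering, where $x$ is the marked vertex, together with the ordering of the $n-r$ other vertices as a single word $w\in\Perm(U)$ with $U=V\setminus K$ and $m=n-r$. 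The state then corresponds to a pair $(w,s)$, with $s=j-r$ and $x$ inserted after position $s$. This is a bijection between $\Omega$ and the set of pairs (ordered edge $K$, $(w,s)$ with $0\le s\le m$). In particular, for each ordered edge there are exactly $(m+1)!$ pairs, which sum to $M$.

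For a fixed ordered edge $K$, define $\mathcal A\subseteq 2^U$ to be the family of sets $R$ such that some embedding $\varphi$ of $T_1$ with $\varphi(u_i)=v_i$ and $\varphi(u_r)=x$ has $\varphi(V(T_1)\setminus e)\subseteq R$. Define $\mathcal B$ analogously for $T_2$ and $\mathcal C$ analogously for $T$. Then the state $(w,s)$ is good for $(T_1,e)$ exactly when $\supp(w_{\le s})\in\mathcal A$, since $V_j(\pi)=K\cup\supp(w_{\le s})$. Hence $c_U(\mathcal A)$ counts the good states for $(T_1,e)$ with this $K$, and similarly for $\mathcal B$ and $\mathcal C$. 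These families are up-closed, because witnessing embeddings remain valid for supersets.

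Next we verify the hypotheses of Lemma~\ref{lem:prefix}. First, $\mathcal C\subseteq\mathcal A$, because restricting an embedding of $T$ gives an embedding of $T_1$. For \eqref{eq:disjoint-gluing}, take $R\in\mathcal A$ and $X\in\mathcal B$ with $R\cap X=\varnothing$, witnessed by $\varphi_1$ and $\varphi_2$. Both maps agree on $e$, and since $V(T_1)\cap V(T_2)=e$, their union is a well-defined map on $V(T)$. It is injective, because the images of $V(T_1)\setminus e$ and $V(T_2)\setminus e$ lie in the disjoint sets $R$ and $X$, and both avoid $K$. Every edge of $T$ lies in $T_1$ or in $T_2$, so the union is an embedding, and its non-root image lies in $R\cup X$. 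Thus $R\cup X\in\mathcal C$.

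Lemma~\ref{lem:prefix} then gives $c_U(\mathcal A)+c_U(\mathcal B)\le (m+1)!+c_U(\mathcal C)$ for each ordered edge $K$. Summing over all ordered edges yields \eqref{eq:root-gluing}. The only delicate point is checking that the reparametrization of $\Omega$ by $(K,w,s)$ is a bijection and matches $V_j(\pi)$. The ``$\subseteq R$'' formulation is what makes the families up-closed and lets the gluing hypothesis hold with only disjointness assumed.
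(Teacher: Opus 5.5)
Your proposal is correct and is essentially the paper's proof. You fix an ordered root edge and identify its $(n-r+1)!$ marked states with pairs $(w,s)$ over $U=V\setminus K$. You define $\mathcal A,\mathcal B,\mathcal C$ via embeddings whose non-root image lies in the prefix support, which is equivalent to the paper's ``embeds into $H[A_0\cup X]$''. You then verify the hypotheses of Lemma~\ref{lem:prefix} and sum over the $r!|E(H)|$ ordered edges to recover $M$. The opening attempt to group by the frame $(v_1,\ldots,v_{r-1},v_j,B)$ is an abandoned detour (it fixes $s$, leaving Lemma~\ref{lem:prefix} nothing to act on) and can simply be deleted.
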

	
	\begin{proof}
		Fix an ordering $e=(u_1,\ldots,u_r)$ and an assignment
		$u_i\mapsto a_i$ to distinct vertices with
		$\{a_1,\ldots,a_r\}\in E(H)$. Put
		\[
		A_0=\{a_1,\ldots,a_r\},\qquad U=V(H)\setminus A_0,
		\qquad m=n-r.
		\]
		A marked state with this root-edge assignment is of the form 
		$$((a_1, \ldots, a_{r-1}, P, a_r, Q), r+|P|).$$
		Consequently, for $w=(P, Q)$, these marked states are in bijection
		with all pairs $(w,s)$, where $w\in\Perm(U)$ and $0\le s\le m$:
		\begin{equation}\label{eq:fiber}
			(w,s)\longleftrightarrow
			\bigl((a_1,\ldots,a_{r-1},w_{\le s},a_r,w_{>s}),\ r+s\bigr).
		\end{equation}
		There are $(m+1)!$ such states. In particular, $s=0$ is allowed:
		the empty prefix outside $A_0$ corresponds to a marked state.

		Let $\mathcal A$, $\mathcal B$ and $\mathcal C$ be the families of
		sets $X\subseteq U$ for which $T_1$, $T_2$ and $T$, respectively,
		embed into $H[A_0\cup X]$ with the prescribed assignment on $e$.
		Clearly $\mathcal C\subseteq\mathcal A$, since if we restrict an embedding of $\mathcal C$ we get an embedding of $\mathcal A$.
		If $R\in\mathcal A$, $X\in\mathcal B$ and $R\cap X=\varnothing$,
		choose the two witnessing embeddings. They agree on every vertex
		of $e$ and have disjoint images outside $e$. Since
		$V(T_1)\cap V(T_2)=e$, their union is an injective embedding of
		$T$ into $H[A_0\cup R\cup X]$. Hence $R\cup X\in\mathcal C$.
		Consequently, $\mathcal A, \mathcal B, \mathcal C$ satisfy the hypothesis of Lemma~\ref{lem:prefix}.
		
		We now sum the conclusion of Lemma~\ref{lem:prefix} to obtain the conclusion of Lemma~\ref{lem:root-gluing}. For clarity of presentation, we make the counting explicit. Keep the ordering
		$e=(u_1,\ldots,u_r)$ fixed, and let
		$\mathcal I_H$ be the set of ordered edges of $H$, so that
		$|\mathcal I_H|=r!|E(H)|$.
		For each $\mathbf a=(a_1,\ldots,a_r)\in\mathcal I_H$, define
		\[
		\Omega_{\mathbf a}
		=
		\{(\pi,j)\in\Omega:
		v_i=a_i\text{ for }1\le i<r,\quad v_j=a_r\}.
		\]
		Let $C_{\mathbf a}(S,e)$ be the number of states in
		$\Omega_{\mathbf a}$ that are good for $(S,e)$.
		Every state $(\pi,j)\in\Omega$ determines the unique assignment
		$\mathbf a=(v_1,\ldots,v_{r-1},v_j)$.
		Consequently, the sets $\Omega_{\mathbf a}$ partition $\Omega$,
		and, for every $S$ under consideration,
		\begin{equation}\label{eq:assignment-sum}
			C(S,e)
			=
			\sum_{\mathbf a\in\mathcal I_H}C_{\mathbf a}(S,e).
		\end{equation}
		We count each good state once, regardless of how many embeddings satisfy its defining conditions.
		Now fix $\mathbf a\in\mathcal I_H$, and define $A_0$, $U$ and
		the families $\mathcal A,\mathcal B,\mathcal C$ as above.
		Under the bijection~\eqref{eq:fiber}, a pair $(w,s)$ corresponds
		to the state
		\[
		\pi=(a_1,\ldots,a_{r-1},
		w_1,\ldots,w_s,a_r,w_{s+1},\ldots,w_m),
		\qquad j=r+s.
		\]
		Its first $j$ vertices form exactly the set
		\[
		V_j(\pi)=A_0\cup\{w_1,\ldots,w_s\}.
		\]
		Moreover, the prescribed images of the root vertices are
		$u_i\mapsto a_i$ for every $1\le i\le r$.
		It follows that this state is good for $(T_1,e)$ if and only
		if $\{w_1,\ldots,w_s\}\in\mathcal A$. The corresponding
		equivalences hold for $(T_2,e)$ and $\mathcal B$, and for
		$(T,e)$ and $\mathcal C$. Therefore
		\[
		C_{\mathbf a}(T_1,e)=c_U(\mathcal A),\qquad
		C_{\mathbf a}(T_2,e)=c_U(\mathcal B),\qquad
		C_{\mathbf a}(T,e)=c_U(\mathcal C).
		\]
		Lemma~\ref{lem:prefix} consequently gives, for this assignment,
		\[
		C_{\mathbf a}(T_1,e)+C_{\mathbf a}(T_2,e)
		\le (m+1)!+C_{\mathbf a}(T,e).
		\]
		
		Finally, $m=n-r$ is the same for every assignment. Summing
		the preceding inequality over $\mathcal I_H$ and
		using~\eqref{eq:assignment-sum}, we obtain
		\[
		\begin{aligned}
			C(T_1,e)+C(T_2,e)
			&=
			\sum_{\mathbf a\in\mathcal I_H}
			\bigl(C_{\mathbf a}(T_1,e)+C_{\mathbf a}(T_2,e)\bigr)\\
			&\le
			\sum_{\mathbf a\in\mathcal I_H}
			\bigl((m+1)!+C_{\mathbf a}(T,e)\bigr)\\
			&=
			r!|E(H)|(m+1)!+C(T,e)\\
			&=
			M+C(T,e),
		\end{aligned}
		\]
		where the last equality follows from $m+1=n-r+1$
		and~\eqref{eq:counts}. This proves~\eqref{eq:root-gluing}.
		\iffalse
		Apply Lemma~\ref{lem:prefix} to these three families. Under
		\eqref{eq:fiber}, the three prefix counts are exactly the numbers
		of good states with the fixed assignment for $T_1$, $T_2$ and
		$T$. Summing~\eqref{eq:prefix-gluing} over all $r!|E(H)|$ possible
		assignments gives~\eqref{eq:root-gluing}, since
		$r!|E(H)|(m+1)!=M$ by~\eqref{eq:counts}.
		\fi
	\end{proof}
	
	\subsection{Completing the proof of Theorem~\ref{thm:main}}\label{subsec:induction}
	In this section we obtain an upper bound for $M$ by applying Lemmas~\ref{lem:leaf} and
	\ref{lem:root-gluing},  and use this to complete the proof of Theorem~\ref{thm:main}.
	
	\begin{theorem}\label{thm:rooted-count}
		For every tight $r$-tree $T$ with $t\ge1$ edges and every
		$e\in E(T)$,
		\begin{equation}\label{eq:rooted-count}
			M\le C(T,e)+(t-1)W.
		\end{equation}
	\end{theorem}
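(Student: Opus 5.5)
We argue by strong induction on $t$, the number of edges of $T$. For $t=1$, $T$ is the single edge $e$, and \eqref{eq:rooted-base} gives $C(e,e)=M$, so \eqref{eq:rooted-count} holds with equality. Now let $t\ge2$, fix $e\in E(T)$, and assume \eqref{eq:rooted-count} for all tight trees with fewer edges and all choices of root edge. Apply Lemma~\ref{lem:structure} to $T$ and this $e$.

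\emph{Case \ref{case:leaf}.} Here $T$ is obtained from a tight tree $S$ with $t-1$ edges by adding the edge $e=F\cup\{z\}$, where $F\subset f\in E(S)$. The induction hypothesis applied to $S$ rooted at $f$ gives $M\le C(S,f)+(t-2)W$. Lemma~\ref{lem:leaf} gives $C(S,f)\le C(T,e)+W$. Combining the two inequalities yields $M\le C(T,e)+(t-1)W$. This is the reason we need the hypothesis for arbitrary root edges: the root moves from $f$ to $e$.

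\emph{Case \ref{case:gluing}.} Here $T=T_1\cup T_2$ with $E(T_1)\cap E(T_2)=\{e\}$ and $V(T_1)\cap V(T_2)=e$. Write $t_i=|E(T_i)|<t$. Since the only shared edge is $e$, we have $t_1+t_2=t+1$. The induction hypothesis, applied to each $T_i$ rooted at $e$, gives $M\le C(T_i,e)+(t_i-1)W$ for $i=1,2$. Adding these and applying Lemma~\ref{lem:root-gluing} gives
\[
2M\le C(T_1,e)+C(T_2,e)+(t_1+t_2-2)W\le M+C(T,e)+(t-1)W.
\]
Subtracting $M$ from both sides gives \eqref{eq:rooted-count}.

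The substantive work is already contained in Lemmas~\ref{lem:leaf} and \ref{lem:root-gluing}, so the only points to check here are bookkeeping. The edge count $t_1+t_2=t+1$ must exactly cancel the extra $M$ produced by the gluing lemma, and the induction must be stated for every root edge so that Case~\ref{case:leaf} can be invoked with root $f$. I expect no further obstacle. The main theorem then follows: if $H$ is $T$-free, then $C(T,e)=0$, so $M\le(t-1)W$, and \eqref{eq:counts} gives $r|E(H)|\le(t-1)|\sh H|$.
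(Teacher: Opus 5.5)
Your proof is correct and matches the paper's argument step for step. It uses induction on $t$ with base case \eqref{eq:rooted-base}. Case~\ref{case:leaf} applies the induction hypothesis rooted at $f$ and then Lemma~\ref{lem:leaf}, and Case~\ref{case:gluing} adds the two rooted hypotheses and applies Lemma~\ref{lem:root-gluing} with $t_1+t_2=t+1$.
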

	
	\begin{proof}
		We proceed by induction on $t$. For $t=1$, the claim
		is~\eqref{eq:rooted-base}. Suppose $t\ge2$, and apply
		Lemma~\ref{lem:structure} at $e$. In case~\ref{case:leaf}, the smaller tight tree $S$ has $t-1$
		edges and for some edge $f \in E(S)$ the edge $e$ intersects $f$ in exactly $r - 1$ vertices. The induction hypothesis rooted at $f$, followed by
		Lemma~\ref{lem:leaf}, gives
		\[
		M\le C(S,f)+(t-2)W\le C(T,e)+(t-1)W.
		\]
		
		In case~\ref{case:gluing}, write $t_i=|E(T_i)|$. Then $t_i<t$
		for $i=1,2$ and $t_1+t_2=t+1$. Add the two induction inequalities
		and apply Lemma~\ref{lem:root-gluing} to obtain
		\[
		\begin{aligned}
			2M
			&\le C(T_1,e)+C(T_2,e)+(t_1+t_2-2)W\\
			&\le M+C(T,e)+(t-1)W.
		\end{aligned}
		\]
		Subtracting $M$ completes the induction.
	\end{proof}
	
	\begin{proof}[Proof of Theorem~\ref{thm:main}]
		If $E(H)=\varnothing$, the assertion is immediate. Otherwise
		$n\ge r$. Fix any edge $e$ of $T$. Since $H$ is $T$-free,
		$C(T,e)=0$. Theorem~\ref{thm:rooted-count} and~\eqref{eq:counts}
		give
		\[
		r!\,|E(H)|\,(n-r+1)! = M \le (t-1)W 
		= (t-1)(r-1)!\,|\sh H|\,(n-r+1)!.
		\]
		Cancelling $(r-1)!(n-r+1)!$ proves
		$r|E(H)|\le(t-1)|\sh H|$. Finally,
		$|\sh H|\le\binom{n}{r-1}$ gives~\eqref{eq:kalai-binomial}.
	\end{proof}


\begin{thebibliography}{100}
		
		\bibitem{ASS} M.~Ajtai, M.~Simonovits, E.~Szemerédi, \emph{Solution of the Erd\H{o}s-S\'{o}s Conjecture}, Presentation at Bondy Conference, Montreal, 2003.
		
		\bibitem{BPSS2019} G.~Besomi, M.~Pavez-Sign{\'e}, and M.~Stein, \emph{Degree conditions for embedding trees}, SIAM J. Discrete Math. \textbf{33} (2019), 1521--1555. 
		
		\bibitem{BPSS2021} G.~Besomi, M.~Pavez-Sign{\'e}, and M.~Stein, \emph{On the Erd{\H{o}}s--S{\'o}s conjecture for trees with bounded degree}, Combin. Probab. Comput. \textbf{30} (2021), 741--761. 
		
		\bibitem{BD} S.~Brandt and E.~Dobson, \emph{The Erd{\H{o}}s--S{\'o}s conjecture for graphs of girth 5}, Discrete Math. \textbf{150} (1996), 411--414. 
		
		\bibitem{DPRS} A.~Davoodi, D.~Piguet, H.~\v{R}ada, and N.~Sanhueza-Matamala, \emph{The asymptotic version of the Erd{\H{o}}s--S{\'o}s conjecture and beyond}, arXiv:2603.17755, 2026. 
		
		
		\bibitem{Epoch}
		T.~Adamczewski and T.~F.~Bloom,
		\emph{FrontierMath Erd\H{o}s}, Epoch AI report, September 2026.
		\url{https://epoch.ai/files/frontiermath-erdos.pdf}.
		
		\bibitem{Erdos}
		P.~Erd\H{o}s,
		\emph{Extremal problems in graph theory},
		in: Theory of Graphs and its Applications (M.~Fiedler, ed.),
		Academic Press, New York, 1965, 29--36.
		
		\bibitem{Erdos1964} P.~Erd{\H{o}}s, \emph{Extremal problems in graph theory}, in: Theory of Graphs and its Applications (Proc. Sympos. Smolenice, 1963), Publ. House Czechoslovak Acad. Sci., Prague, 1964, pp.~29--36. 
		
		\bibitem{EG}
		P.~Erd\H{o}s and T.~Gallai,
		\emph{On maximal paths and circuits of graphs},
		Acta Math. Acad. Sci. Hungar. \textbf{10} (1959), 337--356.
		
		\bibitem{ErdosSos1970} P.~Erd{\H{o}}s and V.~T.~S{\'o}s, \emph{Some remarks on Ramsey's and Tur{\'a}n's theorem}, in: Combinatorial Theory and its Applications, II (Proc. Colloq., Balatonf{\"u}red, 1969), Colloq. Math. Soc. J{\'a}nos Bolyai, Vol.~4, North-Holland, Amsterdam, 1970, pp.~395--404.
		
		\bibitem{FanSun2007} G.~Fan and L.~Sun, \emph{The Erd{\H{o}}s--S{\'o}s conjecture for spiders}, Discrete Math. \textbf{307} (2007), 3055--3062. 
		
		
		
		\bibitem{FF}
		P.~Frankl and Z.~F\"uredi,
		\emph{Exact solution of some Tur\'an-type problems}, J. Combin. Theory Ser. A \textbf{45} (1987), no.~2, 226--262.
		
		\bibitem{FJKMVtrunk}
		Z.~F\"uredi, T.~Jiang, A.~Kostochka, D.~Mubayi and J.~Verstra\"ete,
		\emph{Hypergraphs not containing a tight tree with a bounded trunk},
		SIAM J. Discrete Math. \textbf{33} (2019), no.~2, 862--873.
		\href{https://arxiv.org/abs/1712.04081}{arXiv:1712.04081}.
		
		\bibitem{FJKMVpaths}
		Z.~F\"uredi, T.~Jiang, A.~Kostochka, D.~Mubayi and J.~Verstra\"ete,
		\emph{Tight paths in convex geometric hypergraphs},
		Adv. Combin. \textbf{2020} (2020), Paper No.~1, 14~pp.
		\href{https://doi.org/10.19086/aic.12044}{doi:10.19086/aic.12044}.
		
		\bibitem{GKLO} S.~Glock, D.~K\"{u}hn, A.~Lo, D.~Osthus, \emph{The existence of designs via iterative absorption: hypergraph $F$-designs for arbitrary $F$}, Memoirs of the American Mathematical Society \textbf{284} (2023), monograph 1406).
		
		
		\bibitem{GoerlichZak2016} A.~Goerlich and A.~\.{Z}ak, \emph{On Erd{\H{o}}s--S{\'o}s conjecture for trees of large size}, Electron. J. Combin. \textbf{23} (2016), Paper~P1.52. 
		
		\bibitem{HRSW2020} F.~Havet, B.~Reed, M.~Stein, and D.~R.~Wood, \emph{A variant of the Erd{\H{o}}s--S{\'o}s conjecture}, J. Graph Theory \textbf{94} (2020), 131--158.
		
		
		\bibitem{Keevash}
		P.~Keevash,
		\emph{The existence of designs},
		\href{https://arxiv.org/abs/1401.3665}{arXiv:1401.3665}, 2014;
		revised 2024.
		
		\bibitem{KupitzPerles} Y. S. Kupitz, M. Perles, \emph{Extremal theory for convex matchings in convex geometric graphs}, Discrete Comput. Geom. \textbf{15}, (1996), 195--220. 
		
		
		\bibitem{McLennan2005} A.~McLennan, \emph{The Erd{\H{o}}s--S{\'o}s conjecture for trees of diameter four}, J. Graph Theory \textbf{49} (2005), 291--301.
		
		\bibitem{MubayiV2016} D.~Mubayi and J.~Verstra\"ete, \emph{Counting trees in graphs}, Electron. J. Combin. \textbf{23} (2016), no.~3, Paper P3.39.
		
		\bibitem{Pokrovskiy2024} A.~Pokrovskiy, \emph{Hyperstability in the Erd{\H{o}}s--S{\'o}s conjecture}, arXiv:2409.15191, 2024. 
		
		\bibitem{ReedStein2026} B.~Reed and M.~Stein, \emph{The Erd{\H{o}}s--S{\'o}s conjecture in dense graphs}, arXiv:2609.05417, 2026. 
		
		
		\bibitem{Rodl}
		V.~R\"odl,
		\emph{On a packing and covering problem},
		European J. Combin. \textbf{6} (1985), no.~1, 69--78.
		
		\bibitem{Rozhon2019} V.~Rozho\v{n}, \emph{A local approach to the Erd{\H{o}}s--S{\'o}s conjecture}, SIAM J. Discrete Math. \textbf{33} (2019), 643--664.
		
		\bibitem{SacleWozniak1997} J.-F.~Sacl{\'e} and M.~Wo{\'z}niak, \emph{The Erd{\H{o}}s--S{\'o}s conjecture for graphs without $C_4$}, J. Combin. Theory Ser. B \textbf{70} (1997), 367--372. 
		
		\bibitem{Stein} M.~Stein, \emph{Tree containment and degree conditions}, in: A.~M.~Raigorodskii and M.~T.~Rassias (eds.), Discrete Mathematics and Applications, Springer Optimization and Its Applications, Springer, 2020, pp.~459--486. 
		
		\bibitem{TinerTomlin2022} G.~Tiner and Z.~Tomlin, \emph{On the Erd{\H{o}}s--S{\'o}s conjecture for $k=9$}, Alabama J. Math. \textbf{45} (2022), 37--45.
		
		\bibitem{Wilson2025} C.~Wilson, \emph{A Tight Lower bound on Trees in Graphs}, https://arxiv.org/abs/2512.14890. 
		
		\bibitem{Wozniak1996} M.~Wo{\'z}niak, \emph{On the Erd{\H{o}}s--S{\'o}s conjecture}, J. Graph Theory \textbf{21} (1996), 229--234.
		
	\end{thebibliography}
\end{document}